\newcommand{\R}{\mathbb R}
\newcommand{\Z}{\mathbb Z}
\newcommand{\T}{\mathbb T}
\newcommand{\N}{\mathbb N}
\newcommand{\p}{\partial}
\newcommand{\x}{\mathbf x}
\newcommand{\m}{\mathbf m}
\newcommand{\n}{\mathbf n}
\newtheorem{theorem}{Theorem}[section]
\newtheorem{remark}[theorem]{Remark}
\newtheorem{lemma}[theorem]{Lemma}
\numberwithin{equation}{section}
\begin{document}
\title[On well-posedness]{On the Periodic Zakharov-Kuznetsov  equation}

\author{F. Linares}
\address{IMPA\\
 Estrada Dona Castorina 110, Rio de Janeiro, 22460-320,  Brazil.}
\email{linares@impa.br}
\author{M. Panthee}
\address{IMECC-UNICAMP\\
13083-859, Campinas, S\~ao Paulo,  Brazil}
\email{mpanthee@ime.unicamp.br}
\thanks{MP was partially supported by FAPESP (2016/25864-6) and CNPq (305483/2014-5) Brasil.}
\author{T. Robert}
\address{
Universit\'e de Cergy-Pontoise,  Cergy-Pontoise, F-95000, UMR 8088 du CNRS
}
\email{tristan.robert@u-cergy.fr}
\author{N. Tzvetkov}
\address{
Universit\'e de Cergy-Pontoise,  Cergy-Pontoise, F-95000, UMR 8088 du CNRS
}
\email{nikolay.tzvetkov@u-cergy.fr}


\keywords{Dispersive equation, Well-posedness, Strichartz estimate}
\subjclass{Primary: 35Q53. Secondary: 35B05}

\begin{abstract} We consider the Cauchy  problem  associated with the Zakharov-Kuznetsov equation, posed on $\T^2$.  We prove the local well-posedness for given data in $H^s(\T^2)$ whenever $s> 5/3$.   More importantly, we prove that  this equation is of quasi-linear type for initial data in any Sobolev space on the torus, 
in sharp contrast with its semi-linear character in the $\R^2$ and $\R\times \T$ settings.
\end{abstract}
\maketitle
\section{Introduction}
In this work we consider the initial value problem (IVP) associated with the following bi-dimensional dispersive model
\begin{equation}\label{zk-1}
\begin{cases}
\p_tw+\p_x^3w+\p_x\p_y^2w +w\p_xw=0, \hskip15pt \mathbf{x}=(x,y)\in \T^2, \;\;t\in\R,\\
w(0,x,y)=w_0(x,y),
\end{cases}
\end{equation}
where $w:\R\times\T^2\to \R$, $\T^2:=\R^2/(2\pi\Z)^2$.
\\

The model presented in \eqref{zk-1} is a bi-dimensional generalization  of the   Korteweg-de Vries (KdV) equation, and was introduced in \cite{ZK}   
to describe the propagation of nonlinear ion-acoustic waves in magnetized plasma (for its rigorous derivation we refer to \cite{LLS}). 
This model is widely known as the Zakharov-Kuznetsov (ZK) equation and is extensively studied in the literature, see for example \cite{Fa, G-H, LPS10, LP09, LS, M-P,RV} and the references therein. 
 \\
 
 If \eqref{zk-1} is posed on $\R^2$, it is known that it is locally well-posed in $H^s(\R^2)$, $s>1/2$ (see \cite{M-P,G-H}).
 Furthermore,  if \eqref{zk-1} is posed on $\R\times\T$ then it is well-posed in $H^s(\R\times\T)$, $s\geq 1$ (see \cite{M-P}).
 In all these results the dispersion in the $x$ direction plays a crucial role, the solutions are constructed by the Picard iteration and the flow map is smooth on $H^s$.
 \\
  
 Our interest here is  in studying  the local  well-posedness issue to the IVP \eqref{zk-1} for given data in the periodic Sobolev space  $H^s(\T^2)$, $s\in \R$. 
 We note that for $s>2$ one can ignore the dispersive effects and solve  \eqref{zk-1} as a quasi-linear hyperbolic equation leading to  the local well-posedness of  \eqref{zk-1} in $H^s(\T^2)$, $s>2$.   Our first goal is to prove a well-posedness result in spaces of lower regularity. 
 \begin{theorem}\label{local-zk}
Let $s>5/3$. Then for every  $w_0\in H^s(\T^2)$ there exist a time $T=T(\|w_0\|_{H^s})$ and a unique solution $w\in C([0,T]: H^s(\T^2))$ to the IVP \eqref{zk-1} such that $w, \partial_x w, \partial_y w \in L_T^1L_{xy}^{\infty}$. Moreover, the map that takes the initial data to the solution $w_0\mapsto w\in C([0,T]: H^s(\T^2))$ is continuous.
\end{theorem}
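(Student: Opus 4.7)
Because the equation is quasi-linear on $\T^2$, Picard iteration is unavailable; I will construct the solution through a priori estimates and compactness, and then establish uniqueness and continuity of the data-to-solution map by a Bona--Smith type argument. The two ingredients that have to be combined are an $H^s$ energy estimate whose growth rate is controlled by $\|\p_x w\|_{L^1_TL^\infty_{xy}}+\|\p_y w\|_{L^1_TL^\infty_{xy}}$, and a Strichartz-type inequality for the linear Zakharov--Kuznetsov group on $\T^2$ that converts the $H^s$ information into a control of that integral precisely when $s>5/3$.

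\textbf{Energy estimate.} Applying $J^s:=(1-\Delta)^{s/2}$ to \eqref{zk-1}, pairing with $J^s w$ in $L^2(\T^2)$ and integrating by parts, the skew-adjointness of $\p_x^3+\p_x\p_y^2$ kills the linear contribution; writing $J^s(w\p_x w)=w\p_x J^s w+[J^s,w]\p_x w$, an integration by parts in the first term and a Kato--Ponce commutator estimate on the second lead to
\[
\frac{d}{dt}\|w(t)\|_{H^s}^2 \lesssim \bigl(\|\p_x w\|_{L^\infty_{xy}}+\|\p_y w\|_{L^\infty_{xy}}\bigr)\|w(t)\|_{H^s}^2,
\]
so that Gr\"onwall yields an exponential $H^s$ bound driven by $\int_0^T(\|\p_x w\|_{L^\infty}+\|\p_y w\|_{L^\infty})\,dt$.

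\textbf{Strichartz estimate (main obstacle).} The heart of the proof is to bound this last integral by $T^\theta F(\|w\|_{L^\infty_TH^s})$ for some $\theta>0$ when $s>5/3$. The expected linear estimate takes the form (possibly modulo logarithms and on short time intervals)
\[
\|P_N e^{-tL}f\|_{L^2_TL^\infty_{xy}} \lesssim N^{2/3}\|P_Nf\|_{L^2_{xy}},\qquad L=\p_x^3+\p_x\p_y^2,
\]
where $P_N$ is a Littlewood--Paley projector at frequency $N$. Combined with Cauchy--Schwarz in time, a Littlewood--Paley summation (which converges exactly when $s-1>2/3$) and a Duhamel step using the a priori $H^s$ bound, this yields the desired estimate. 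Establishing the above inequality on the torus is the technical crux: it typically proceeds through a short-time, frequency-dependent time localization in the spirit of Koch--Tzvetkov, reducing matters to a dispersive estimate on an interval of length $\sim N^{-\alpha}$, itself resting on counting the lattice points lying in a thin neighborhood of the characteristic variety $\{\tau=\xi^3+\xi\eta^2\}$ of the equation. This is where the periodic setting truly differs from $\R^2$ and where the loss $N^{2/3}$, rather than $N^0$, is forced.

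\textbf{Existence, uniqueness, and continuity.} Existence proceeds by parabolic regularization $\p_t w^\e+Lw^\e+w^\e\p_x w^\e=\e\Delta w^\e$: classical theory supplies a global smooth solution at each $\e>0$; the previous energy and Strichartz bounds are uniform in $\e\in(0,1]$; and Aubin--Lions compactness extracts a limit $w\in C([0,T];H^s)$ solving \eqref{zk-1}. Uniqueness is obtained by estimating the $L^2$-norm of the difference $w_1-w_2$ of two solutions using only the already available $L^1_TW^{1,\infty}_{xy}$ control of each of them. Finally, continuity of the flow in $H^s$ follows from the Bona--Smith procedure: comparing $w$ to the solutions issued from the smoothed data $\rho_\delta * w_0$, the stability of their difference in $L^2$ together with the a priori control of the $H^{s+1}$ norm of the regularized solution in terms of $\delta$ gives strong convergence in $C([0,T];H^s)$ as $\delta\to 0$, which simultaneously provides uniqueness in $C([0,T];H^s)$ and continuity of the data-to-solution map.
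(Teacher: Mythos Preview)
Your overall architecture---energy estimate driven by $\|\nabla w\|_{L^1_TL^\infty}$, Strichartz control of that quantity with threshold $s>5/3$, compactness, and Bona--Smith---is exactly the paper's. Two implementation details differ. First, for the Strichartz estimate the paper does \emph{not} count lattice points near the characteristic variety; instead it uses Poisson summation to transfer the problem to $\R^2$, then applies the Gr\"unrock--Herr symmetrization $(\xi_1,\xi_2)\mapsto(\xi_1+\xi_2/\sqrt{3},\xi_1-\xi_2/\sqrt{3})$ to decouple the phase into a product of one-dimensional Airy-type integrals, each handled by van der Corput. This is a purely local-in-space argument in the spirit of Burq--G\'erard--Tzvetkov; the paper explicitly remarks that number-theoretic counting arguments might improve the result, but does not pursue them. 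Second, for existence the paper regularizes the \emph{data} (taking $w_0^\epsilon\in H^\infty$ and invoking the classical $H^s$, $s>2$, theory) rather than adding a viscosity $\epsilon\Delta w^\epsilon$; either scheme works here, but data regularization avoids having to re-derive the dispersive estimate for the perturbed semigroup. A minor slip: Aubin--Lions (or the paper's Cauchy-in-$L^2$ argument) only delivers $C([0,T];H^{s'})\cap L^\infty_TH^s$ for $s'<s$; the upgrade to $C([0,T];H^s)$ is part of the Bona--Smith step, not the compactness step.
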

This local well-posedness result for the IVP associated with the ZK equation  \eqref{zk-1} posed in a purely periodic domain is the first one exploiting in a non trivial way the dispersive effect.  The method of proof of Theorem~\ref{local-zk} is nowadays standard (see e.g. \cite{BGT,IK,CK,KK, KTz} ). It uses in a crucial way short time Strichartz estimates for the linear part of the equation, see Lemma~\ref{lem-zk1}.  
Our short time Strichartz estimate is derived by purely local in space considerations in the spirit of \cite{BGT}. 
It may be that global in space considerations based on number theory arguments may improve the result of Lemma~\ref{lem-zk1}. 
To the best of our knowledge, the works \cite{KPV91,LV} are the first papers where short time Strichartz estimates on a compact spatial domain were considered.  
We also expect that further improvements can be obtained by using the ideas introduced in \cite{IKT}.
\\

Our main purpose in this work is to show that the flow map constructed in Theorem~\ref{local-zk} is not locally uniformly continuous, which highlights the quasilinear behaviour of the ZK equation (\ref{zk-1}) when considered with periodic boundary conditions. For previous related contributions, we refer to \cite{KochTzvetkov2003BO,KochTzvetkov2008,Robert2018}.
\begin{theorem}\label{lemma1.1} 
Let $s>5/3$. There exist two positive constants $c$ and $C$ and two sequences $(u_m)$ and $(v_m)$ of solutions to (\ref{zk-1}) in $C([0,1] : H^s(\T^2))$ such that
\begin{equation}\label{unif_bound}
\sup_{t\in[0,1]}\|u_m(t)\|_{H^s}+\|v_m(t)\|_{H^s} \leq C,
\end{equation}
and satisfying initially
\begin{equation}\label{init_bound}
\lim_{m\rightarrow +\infty}\|u_m(0)-v_m(0)\|_{H^s}=0,
\end{equation}
but such that for every $t\in[0,1]$,
\begin{equation}\label{failure_bound}
\liminf_{m\rightarrow +\infty}\|u_m(t)-v_m(t)\|_{H^s}\geq ct.
\end{equation}
\end{theorem}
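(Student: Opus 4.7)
The strategy, in the spirit of \cite{KochTzvetkov2003BO, KochTzvetkov2008}, rests on a Galilean-type covariance of the ZK equation: if $w$ solves \eqref{zk-1}, then so does $w^c(t,x,y):=c+w(t,x-ct,y)$ for every $c\in\R$. This is a one-line check: the convective shift $-c\,\partial_x w$ produced by $\partial_t$ is precisely cancelled by the cross term $c\,\partial_x w$ coming from the nonlinearity $w^c\partial_x w^c$. Therefore two exact solutions can be obtained from a single ``seed'' by prescribing two slightly different shift speeds.

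\textbf{Setup.} As seed I take the $y$-independent solution $w_m(t,x)$ of \eqref{zk-1} with datum $w_m(0,x)=\epsilon_m\cos(k_m x)$, where $\epsilon_m:=k_m^{-s}$ and $k_m\to\infty$. Since $\partial_x\partial_y^2 w_m\equiv 0$, the equation for $w_m$ reduces to the KdV equation on $\T$, which is globally well-posed in $H^s(\T)$; moreover $\|w_m(0)\|_{H^s(\T)}\simeq 1$, so $\|w_m(t)\|_{H^s(\T)}\le C$ uniformly on $[0,1]$ by the standard KdV conservation laws. Setting $c_m:=1$, $\tilde c_m:=1+k_m^{-1}$, define
\[
u_m(t,x,y):=1+w_m(t,x-t,y),\qquad v_m(t,x,y):=(1+k_m^{-1})+w_m(t,x-(1+k_m^{-1})t,y).
\]
Both $u_m$ and $v_m$ are exact solutions of \eqref{zk-1} in $C([0,1]\colon H^s(\T^2))$. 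The uniform bound \eqref{unif_bound} follows from translation invariance of $H^s(\T^2)$, while $\|u_m(0)-v_m(0)\|_{H^s(\T^2)}=k_m^{-1}\|1\|_{H^s(\T^2)}\to 0$ yields \eqref{init_bound}.

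\textbf{Lower bound.} For $(n_1,n_2)\ne(0,0)$, the $y$-independence of $w_m$ gives
\[
\widehat{(u_m-v_m)}(t,n_1,n_2)=\delta_{n_2,0}\,\hat w_m(t,n_1)\bigl(e^{-in_1 t}-e^{-in_1(1+k_m^{-1})t}\bigr),
\]
while the $(0,0)$ mode contributes the constant $-k_m^{-1}$. At the frequencies $n_1=\pm k_m$, the phase mismatch factor is $|1-e^{\mp it}|\gtrsim t$ on $[0,1]$, hence
\[
\|u_m(t)-v_m(t)\|_{H^s(\T^2)}^2\gtrsim (1+k_m^2)^s\bigl(|\hat w_m(t,k_m)|^2+|\hat w_m(t,-k_m)|^2\bigr)\,t^2.
\]
Since $\epsilon_m^2 k_m^{2s}=1$, everything reduces to proving that $|\hat w_m(t,\pm k_m)|\gtrsim\epsilon_m$ uniformly in $t\in[0,1]$ and for large $m$, which will deliver \eqref{failure_bound}.

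\textbf{The key estimate and main obstacle.} The persistence of the initial Fourier coefficient $\hat w_m(0,\pm k_m)=\epsilon_m/2$ is the technical heart of the argument. From the Duhamel formula for KdV,
\[
\hat w_m(t,k_m)=e^{ik_m^3 t}\tfrac{\epsilon_m}{2}-\tfrac{ik_m}{2}\int_0^t e^{ik_m^3(t-s)}\widehat{w_m^2}(s,k_m)\,ds,
\]
and by Cauchy-Schwarz applied to the convolution together with the $L^2$-conservation of KdV, $|\widehat{w_m^2}(s,k_m)|\le \|w_m(s)\|_{L^2(\T)}^2/(2\pi)=\epsilon_m^2/2$. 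Hence the Duhamel remainder is bounded by $Ck_m t\epsilon_m^2=Ck_m^{1-2s}t\to 0$ as $s>5/3>1/2$, leaving $|\hat w_m(t,\pm k_m)|\ge\epsilon_m/4$ for large $m$. The crucial point is that $L^2$-conservation tames the dangerous factor $k_m$ produced by the $\partial_x$ in the nonlinearity, because the whole $L^2$-energy of $w_m$ is concentrated at frequencies of size $\simeq k_m$, allowing a simple $\ell^2$ estimate to replace the naive $\ell^1$ bound on the convolution.
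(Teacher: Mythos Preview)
Your argument is correct and yields a valid proof of the theorem, but by a genuinely simpler route than the paper's. You exploit exact solutions of ZK obtained through the Galilean symmetry $w\mapsto c+w(t,x-ct,y)$ applied to $y$-independent (hence KdV) seeds; the only analytic input is the persistence of the $\pm k_m$ Fourier mode, which follows from a crude Duhamel/Cauchy--Schwarz bound together with $L^2$-conservation (note that $k_m\epsilon_m^2=k_m^{1-2s}\ll k_m^{-s}=\epsilon_m$ precisely for $s>1$, so $s>5/3$ is ample). One small imprecision: the uniform $H^s$-bound on $w_m$ does not literally follow from ``standard KdV conservation laws'' (these control integer Sobolev norms, and $\|w_m(0)\|_{H^2}\sim k_m^{2-s}$ blows up for $s<2$); it follows instead by iterating the local well-posedness of periodic KdV in $H^s$, or within the paper directly from Lemmata~\ref{lem2-zk} and~\ref{lem4-zk} applied to the $y$-independent ZK solution.

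The paper proceeds differently. Instead of a constant low-frequency mode it takes $\theta m^{-1}\cos(2y)$ (at $x$-frequency $0$ but with genuine $y$-dependence) and high-frequency modes at $(m,\pm1)$, builds an explicit approximate solution including correctors at $(m,\pm3)$, shows it solves the equation up to an $H^s$-error of size $m^{(-1-s)\vee(1-2s)}$, and then compares with the true solution via an $L^2$ energy estimate interpolated against the $H^{s+1}$ bound. The key algebraic input is the ``curvature'' identity $R(m,0,\mp1,\pm2)=-8m$, which makes the correctors small. Your approach is more elementary --- exact solutions, no approximation step, essentially the periodic KdV Galilean argument read as a ZK result. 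What the paper's construction buys is robustness: since its low mode is a nontrivial function of $y$ rather than a constant, the failure of uniform continuity it exhibits cannot be removed by the simple Galilean gauge that eliminates your mechanism (and the analogous one for periodic KdV). This is exactly the point of Remark~\ref{FIN}(c): the paper's sequences remain pertinent even if one restricts to data of fixed mean over $\T^2$, whereas your $u_m$ and $v_m$ differ precisely by their means and would be identified after the standard Galilean reduction.
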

As mentioned above the result of Theorem~\ref{lemma1.1} shows that the ZK equation behaves quite differently in the case of periodic boundary conditions in the $x$ variable since in the case when ZK is posed on $\R^2$ or $\R\times\T$ a result as Theorem~\ref{lemma1.1}  cannot hold true. 
As in the previous related results in the proof of Theorem~\ref{lemma1.1}  we exploit resonant interactions. 
We use the resonances coming from the zero  $x$ frequency.  
Let us recall that in the context of the KdV or the KP equations we can eliminate this interaction by the use of a simple gauge transform (see \emph{e.g.} \cite{JB_kp,Robert2017}). 
We are not aware of such a gauge transform in the context of the ZK equation and we suspect that the situation is more intricate compared to KdV or KP. 
It is however not excluded that one may reduce the ZK equation on the torus to a semi-linear problem by the use of a suitably chosen gauge transform, 
at least for some class of initial data. 
\\

Let us finally mention that since we work in the periodic setting, in the proof of  Theorem~\ref{lemma1.1} 
we cannot use the localization in space arguments as in \cite{KochTzvetkov2003BO,KochTzvetkov2008}.
This makes that in order to construct the approximation of the true solution, we exploit  a \lq\lq curvature property\rq\rq~of the resonance function associated with the ZK equation 
(see also (\ref{eq-curv-reson}) and Remark~\ref{FIN} below). 
\section{Preliminary estimates}\label{sec-3}

In this section we derive some preliminary estimates that are useful to prove the local well-posedness  result of this work. We begin with the preparation to obtain the Strichartz estimate in localized form.

For $N\in 2^{\N}\cup \{0\}$, we define a projection operator $P_N$ as a Fourier multiplier operator by
\begin{equation}\label{eq-d2}
\begin{cases}
\widehat{P_Nf}(\m) := \chi_{[N;2N[}(|\m|)\widehat{f}(\m),~N\geq 1\\
\widehat{P_0f}(\m) := \chi_{[0;1[}(|\m|)\widehat{f}(\m),~N=0,
\end{cases}
\end{equation}
where $\m=(m_1,m_2)\in\Z^2$, and $\chi_I$ is the indicator function of the interval $I$. In terms of the projection operator, we can write an equivalence for the Sobolev norms on the torus in the following way
\begin{equation*}\label{eq-n2}
\|f\|_{H^s(\T^2)}^2 \sim \sum_{N\geq 0} (1\vee N)^{2s}\|P_Nf\|_{L^2(\T^2)}^2,
\end{equation*}
where we use the notation $a\vee b = \max(a,b)$.

Consider now the linear problem
\begin{equation}\label{zk-6}
\begin{cases}
\p_tw+\p_{x}^3w +\p_x\p_{y}^2w =0, \quad (x,y)\in \T^2,\; t\in \R,\\
w(0,x, y) = w_0(x,y),
\end{cases}
\end{equation}
and let $W(t)$ defined by 
 \begin{equation}\label{w-t}
 W(t)w_0(\x) =\frac{1}{(2\pi)^2} \sum_{\m\in\Z^2}e^{i(\m\cdot\x+(m_1^3+m_1m_2^2)t)}\widehat{w_0}(\m),
 \end{equation}
 be the unitary group that describes the solution to  \eqref{zk-6}.  

In what follows we prove the localized version of the Strichartz estimate satisfied by the unitary group  $W(t)$.

\begin{lemma}\label{lem-zk1} 
Let $W(t)$ be as defined in \eqref{w-t} and $P_N$ be the operator defined in \eqref{eq-d2}. Then for any $w_0\in L^2(\T^2)$ and time interval $I\subset \R$ with $|I|\sim (1\vee N)^{-2}$, 
\begin{equation}\label{eq-zk2a}
\|W(t)P_Nw_0\|_{L^{2}_{I} L_{xy}^{\infty}} \lesssim (1\vee N)^{-1/3} \, \|w_0\|_{L^2_{xy}},
\end{equation}
and
\begin{equation}\label{eq-l3zk}
\|W(t) w_0\|_{L^{2}_{ [0, 1]} L_{xy}^{\infty}} \lesssim \, \|w_0\|_{H^{\frac23+}}.
\end{equation}
\end{lemma}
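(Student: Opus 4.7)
The plan is to prove the localized estimate \eqref{eq-zk2a} via a $TT^{\ast}$ argument based on a pointwise dispersive bound for the kernel of $W(t)P_N$, and then to deduce \eqref{eq-l3zk} by partitioning $[0,1]$ into intervals of length $\sim N^{-2}$ combined with a dyadic Littlewood--Paley decomposition in frequency.

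For the dispersive bound, write the Schwartz kernel as
\[
K_N(s,z)=\sum_{\m\in\Z^2}\psi(\m/N)\,e^{i(\m\cdot z+\phi(\m)s)},\qquad \phi(\m)=m_1^3+m_1m_2^2,
\]
with $\psi$ a smooth bump adapted to $\{|\m|\sim N\}$, and apply Poisson summation to recast it as a sum over $\mathbf{k}\in\Z^2$ of Euclidean oscillatory integrals $I_{N,\mathbf{k}}(s,z)$ on $\R^2$. After the rescaling $\xi=Nu$, each $I_{N,\mathbf{k}}$ has large parameter $\lambda=N^3 s$ (so $\lambda\sim N$ on $|s|\sim N^{-2}$) and phase $\Phi_{\mathbf{k}}(u)=(z+2\pi\mathbf{k})\cdot u/(sN^2)+u_1^3+u_1u_2^2$. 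Since $|\nabla\phi|\sim 1$ on the support of $\psi$, a critical point in $\operatorname{supp}\psi$ can exist only for $|\mathbf{k}|\lesssim 1$; the remaining $\mathbf{k}$ are handled by repeated non-stationary integration by parts and summed in a geometric series. For the contributing $\mathbf{k}$, apply stationary phase: away from the degenerate locus $\{3u_1^2=u_2^2\}$ of $\det\nabla^2\phi=4(3u_1^2-u_2^2)$ the phase is non-degenerate, yielding a factor $\lambda^{-1}$; near the locus the Hessian drops to rank one but the third directional derivative of $\phi$ in the Hessian's null direction $(\mp 1,\sqrt{3})$ is non-vanishing (a short calculation gives $\mp 24$), so van der Corput's lemma in Airy form contributes $\lambda^{-5/6}$. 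The outcome is a uniform kernel bound
\[
\|K_N(s,\cdot)\|_{L^\infty_{xy}}\lesssim \frac{N^2}{(1+N^3|s|)^{5/6}}.
\]

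For the $TT^{\ast}$ step, let $Tf=W(t)P_N f$ seen as $L^2(\T^2)\to L^2_I L^\infty_{xy}$. Then $TT^\ast g$ is the space-time convolution of $K_N$ against $g$, so two applications of Young's inequality give $\|TT^\ast\|_{L^2_I L^1\to L^2_I L^\infty}\leq \|K_N\|_{L^1_{s\in 2I}L^\infty_{xy}}$. For $|I|\sim N^{-2}$ the kernel bound integrates to $\int_0^{N^{-2}}N^2(1+N^3 s)^{-5/6}\,ds\lesssim N^{-5/6}\leq N^{-2/3}$, and extracting a square root produces \eqref{eq-zk2a}. For \eqref{eq-l3zk}, partition $[0,1]$ into $\sim N^2$ subintervals $I_k$ of length $N^{-2}$; unitarity of $W(t)$ on $L^2$ together with \eqref{eq-zk2a} gives $\|W(t)P_N w_0\|_{L^2_{[0,1]}L^\infty_{xy}}^2\lesssim N^2\cdot N^{-2/3}\|P_N w_0\|_{L^2}^2$, hence $\|W(t)P_N w_0\|_{L^2 L^\infty}\lesssim N^{2/3}\|P_N w_0\|_{L^2}$. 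Summing over dyadic $N$ via Minkowski's inequality in $L^\infty_{xy}$ and Cauchy--Schwarz with weights $N^{\pm\epsilon}$ yields \eqref{eq-l3zk}.

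The main technical hurdle is the degenerate curve $\{3u_1^2=u_2^2\}$ along which the Hessian of $\phi$ drops to rank one, where a naive non-degenerate stationary-phase estimate fails. The saving observation is the non-vanishing of the third directional derivative of $\phi$ in the Hessian's null direction, which promotes the phase to Airy type and preserves enough decay. An alternative route via dyadic decomposition $|3u_1^2-u_2^2|\sim 2^j$ and optimization of the trade-off between stationary-phase decay and the measure of the degenerate shell would also work, at the cost of a logarithmic loss---a loss which is comfortably absorbed by the gap between the target exponent $1/3$ and what the Airy-type analysis produces ($5/12$).
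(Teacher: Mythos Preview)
Your proof is correct and shares the overall architecture with the paper's argument: both use a $TT^\ast$ reduction, Poisson summation to pass to Euclidean oscillatory integrals, a non-stationary phase estimate to dispose of the large lattice shifts, and then the partition of $[0,1]$ into $\sim N^2$ short intervals plus a dyadic sum to obtain \eqref{eq-l3zk}. The genuine difference lies in how the dispersive kernel bound is obtained. The paper exploits the algebraic identity behind the Gr\"unrock--Herr change of variables to factor the cubic phase as $\tfrac12(\eta_1^3+\eta_2^3)$; the two-dimensional oscillatory integral then splits as a product of one-dimensional Airy-type integrals, each bounded by $|t|^{-1/3}$ via the classical van der Corput lemma, yielding the kernel bound $|t|^{-2/3}$ and hence the Strichartz exponent $N^{-1/3}$. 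You instead work directly with the unsymmetrized phase $u_1^3+u_1u_2^2$, identify the fold locus $\{3u_1^2=u_2^2\}$ where the Hessian drops to rank one, and invoke the two-dimensional fold estimate (rank-one Hessian with non-vanishing third directional derivative in the null direction) to get $\lambda^{-5/6}$; this produces the sharper kernel bound $N^2(1+N^3|s|)^{-5/6}$ and the stronger Strichartz exponent $N^{-5/12}$. What the paper's route buys is that it is entirely elementary and self-contained---only one-dimensional van der Corput is needed, and the factorization makes uniformity in the external parameters automatic. What your route buys is a slightly better decay rate and the avoidance of the preliminary change of variables, at the cost of invoking (or reproving via the dyadic shell decomposition you mention) the uniform fold-type stationary phase estimate in two dimensions. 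Both are sufficient for the lemma as stated.
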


\begin{proof}
We start by proving \eqref{eq-zk2a}. Since it is straightforward for $N=0$, we assume $N\geq 1$ in the following. For $\x =(x,y)$, using the definition of the group $W(t)$ and the projection operator $P_N$, we have that
\begin{equation*}\label{eq-p1}
W(t)P_Nw_0 = \frac{1}{(2\pi)^2 }\sum_{\m\in \Z^2}e^{i(\m\cdot\x+(m_1^3+m_1m_2^2)t)}\chi_{[N;2N[}(|\m|)\widehat{w_0}(\m).
\end{equation*}
In order to decouple the frequencies in the time oscillation, we will use the symmetrization argument of \cite{G-H}. First, to have good localizations in frequency after symmetrizing, we observe that if $|\m|< 2N$ then $|m_1\pm\frac{m_2}{\sqrt{3}}|< 4N$. Since $P_N$ is a bounded operator with norm one, it suffices to prove that (\ref{eq-zk2a}) holds with $P_N$ replaced with
\[\widetilde{P_N}w_0 = \sum_{\m\in\Z^2}\widetilde{\phi_{4N}}(m_1+\frac{m_2}{\sqrt{3}})\widetilde{\phi_{4N}}(m_1-\frac{m_2}{\sqrt{3}})\widehat{w_0}(m_1,m_2)e^{i\m\cdot\x},\]
where $\widetilde{\phi_{4N}}$ is a smooth cut-off defined as follows : let $\phi\in C_0^{\infty} (-2,2)$ be such that $\phi(r)=1$ for $|r|\leq 1$. Then write
\[\widetilde{\phi_N}(x) = \phi\left(\frac{x}{N}\right).\]
Indeed, with this definition and the remark above, we have $P_N = \widetilde{P_N}P_N$.


We can now define the operator
\begin{equation*}
\mathbf{T} = \chi_I(t)W(t)\widetilde{P_N}
\end{equation*}
where $\chi_I$ stands for the indicator function of the interval $I$.

To prove that $\mathbf{T}$ is bounded from $L^2(\T^2)$ to $L^{2}(I : L^{\infty}(\T^2))$ with norm less than $N^{-\frac13}$, the classical~\lq\lq$TT^*$\rq\rq~argument reduces the problem to show that
\begin{equation*}
g\in L^{2}(I:L^1(\T^2))\mapsto \mathbf{T}\mathbf{T}^*(g)(t)=\int_{\R}\chi_I(t)\chi_I(t')W(t-t')\widetilde{P_N}^2g(t')dt'\in L^{2}(I:L^{\infty}(\T^2))
\end{equation*}
is bounded with norm less than $N^{-\frac23}$. This last operator can be written as an integral operator, whose kernel is given by 
 \begin{equation*}\label{eq-p3.2}
 K_N(t, t', \x,\x'):= \chi_{I}(t)\chi_{I}(t')\sum_{\m\in\Z^2}\widetilde{\phi_{4N}}(m_1+\frac{m_2}{\sqrt{3}})^2\widetilde{\phi_{4N}}(m_1-\frac{m_2}{\sqrt{3}})^2 e^{i[\m\cdot( \x-\x')+(m_1^3+m_1m_2^2)(t-t')]}.
 \end{equation*}
 Note that the time localizations imply that $|t-t'|\lesssim N^{-2}$. Therefore the whole matter reduces to proving the following pointwise estimate on the kernel
 \begin{equation}\label{eq-p4}
 \Big|\sum_{\m\in\Z^2}\widetilde{\phi_{2N}}(m_1+\frac{m_2}{\sqrt{3}})^2\widetilde{\phi_{2N}}(m_1-\frac{m_2}{\sqrt{3}})^2e^{i(\m\cdot \x+(m_1^3+ m_1m_2^2)t)}\Big|\lesssim |t|^{-2/3},
 \end{equation}
 for any $\x\in \T^2$ and $N^{-3}\lesssim |t|\lesssim N^{-2}$.
 
 Indeed, with (\ref{eq-p4}) at hand, the bound on $\mathbf{T}\mathbf{T}^*$ follows from Young's inequality and the fact that
 \[\||t|^{-2/3}\chi_{[N^{-3};N^{-2}]}(t)\|_{L^{1}}\lesssim N^{-\frac23},\]
 whereas the contribution of $|t-t'|\lesssim N^{-3}$ is estimated roughly with the trivial bound $|K_N|\lesssim N^2$.

 To obtain (\ref{eq-p4}), we use Poisson summation formula \cite{SW}
 \begin{equation}\label{eq-p5}
 \sum_{\m\in\Z^2}F(\m) = \sum_{\n\in\Z^2}\widehat{F}(2\pi\n), \qquad \forall\; F\in \mathcal{S}(\R^2).
 \end{equation}
 
 Using \eqref{eq-p5}, the sum in the LHS of \eqref{eq-p4}  becomes
 \begin{equation*}
 \sum_{\n\in \Z^2}\int_{\R^2}e^{-i2\pi\n\cdot\xi} \widetilde{\phi_{4N}}(\xi_1+\frac{\xi_2}{\sqrt{3}})^2\widetilde{\phi_{4N}}(\xi_1-\frac{\xi_2}{\sqrt{3}})^2 e^{i(\x\cdot\xi+(\xi_1^3+\xi_1\xi_2^2t)}d\xi.
 \end{equation*}

 Here we symmetrize the linear evolution : by using the linear transformation in \cite[Section 2.1]{G-H} we can write the integrals within the sum above as
 \[c\int_{\R^2} \widetilde{\phi_{4N}}(\eta_1)^2\widetilde{\phi_{4N}}(\eta_2)^2 e^{i(\eta_1\frac{(x-2\pi n_1)+\sqrt{3}(y-2\pi n_2)}{2}+\eta_2\frac{(x-2\pi n_1)-\sqrt{3}(y-2\pi n_2)}{2}+\frac12(\eta_1^3+\eta_2^3)t}d\eta.\]

Thus (\ref{eq-p4}) can be expressed as
\begin{equation}\label{eq-p6}
\Big|\sum_{\n\in\Z^2} F_N\left(\frac{x+\sqrt{3}y-2\pi (n_1 +\sqrt{3}n_2)}{2}\right)F_N\left(\frac{x-\sqrt{3}y-2\pi(n_1-\sqrt{3}n_2)}{2}\right)\Big|\lesssim |t|^{-2/3},
\end{equation}
where
\[F_N(X)=\int_{\R} \widetilde{\phi_{4N}}(\xi)^2e^{i(\xi X+\frac12 \xi^3t)}d\xi.\]
First, note that from the time and frequency localizations, we have $|\xi^2 t|\lesssim 1$, thus for $|X|$ large enough the phase in the above integral has no stationary point. As a result, we can bound $|F_N(X)|$ for $|X|> C$ for some (large) constant $C>0$ by successive integrations by parts. Indeed, by writing $F_N$ as an abstract oscillatory integral ${\displaystyle \int_{\R}\psi(\xi)e^{i\Phi(\xi)}d\xi}$, using the definition of $\widetilde{\phi_{4N}}$ and the localizations in $\xi$ and $t$ we get that for any $k\in\N$ we have $\|\psi^{(k)}\|_{L^1}\lesssim N^{1-k}$ and $\|\Phi'\|_{L^{\infty}}\sim |X|$, $\|\Phi''\|_{L^{\infty}}\lesssim N^{-1}$ and $\|\Phi^{(3)}\|_{L^{\infty}}\lesssim N^{-2}$. Thus for $|X|>C$,
\[\int_{\R} \psi(\xi)e^{i\Phi(\xi)}d\xi =- \int_{\R}\left[(i\Phi')^{-1}\left\{(i\Phi')^{-1}\left((i\Phi')^{-1}\psi\right)'\right\}'\right]'e^{i\Phi(\xi)}d\xi=O(N^{-2}|X|^{-3}).\]
Now let us come back to (\ref{eq-p6}). Let $M>0$ be some large constant. Using that $x$ and $y$ lie in a compact set, if $|n_1+\sqrt{3}n_2|\vee |n_1-\sqrt{3}n_2|>M$ then we can use the previous bound for the corresponding term in the sum of (\ref{eq-p6}) along with the rough bound $\|F_N\|_{L^{\infty}}\lesssim N$ for the second term, to get
\begin{multline*}
\Big|\sum_{|n_1+\sqrt{3}n_2|\vee |n_1-\sqrt{3}n_2|>M} F_N\left(\frac{x+\sqrt{3}y-2\pi (n_1 +\sqrt{3}n_2)}{2}\right)F_N\left(\frac{x-\sqrt{3}y-2\pi(n_1-\sqrt{3}n_2)}{2}\right)\Big|\\
\lesssim \sum_{|n_1+\sqrt{3}n_2|\vee |n_1-\sqrt{3}n_2|>M} N^{-1}(|n_1+\sqrt{3}n_2|\vee |n_1-\sqrt{3}n_2|)^{-3}\\
\lesssim N^{-1}\sum_{|n_1+\sqrt{3}n_2|\vee |n_1-\sqrt{3}n_2|>M} [(1\vee|n_1|)(1\vee|n_2|)]^{-3/2}.
\end{multline*}
In view of the last bound, to prove (\ref{eq-p6}) it remains to treat the contribution of the sum for $|n_1+\sqrt{3}n_2|<M$ and $|n_1-\sqrt{3}n_2|<M$. In this case we have in particular $|\n|\leq 2M$ so that the sum is finite (uniformly in $N$), thus it is enough to get the bound 
\begin{equation}\label{eq-p6bis}
\|F_N\|_{L^{\infty}}\lesssim |t|^{-1/3}.
\end{equation}
We first write $F_N$ as
 \[F_N(X)=\int_{\R}e^{i(X\xi+\frac{\xi^3}{2}t)}d\xi-\int_{\R}\left[1-\widetilde{\phi_{4N}}^2(\xi)\right]e^{i(X\xi+\frac{\xi^3}{2}t)}d\xi=I+II.\]
 The first term is
 \[I = \frac{c}{t^{\frac13}}\int_{\R} e^{i(\sqrt[3]{2}Xt^{-1/3}\eta+\eta^3)} d\eta.\]
 From  \cite{KPV91} we have that the last integral is bounded (see also Lemma 7.2 in \cite{LP-15} or Lemma 3.6 in \cite{KPV-93}), so it remains to estimate the second term. We observe that the phase $\Phi(\xi)=X\xi+\frac{\xi^3}{2}t$ satisfies $|\Phi''(\xi)|= 3|t\xi|\gtrsim N|t|$ on the support of $(1-\widetilde{\phi_{4N}}^2)$. Thus we can conclude from van der Corput lemma (see \emph{e.g.} \cite[Chapter 8]{Stein}) that
\[|II|\lesssim (N|t|)^{-\frac12}\left\{\|1-\widetilde{\phi_{4N}}^2\|_{L^{\infty}}+\|(1-\widetilde{\phi_{4N}}^2)'\|_{L^1}\right\}\lesssim |t|^{-\frac13},\]
where in the last step we have used the lower bound on $|t|$. This proves (\ref{eq-p6bis}).

The  proof of the estimate \eqref{eq-l3zk} follows from {\it the short time Strichartz estimate} using a  Littlewood-Paley procedure. In fact, splitting the interval $[0, 1]$ on $(1\vee N)^2$ intervals of size $(1\vee N)^{-2}$ we obtain, using {\it the short time Strichartz estimate} (and that $P_N^2=P_N$)
\begin{equation*}\label{eq-p10}
 \|W(t)P_Nw_0\|_{L^{2}([0, 1]; L_{xy}^{\infty})}^{2} 
 \leq c (1\vee N)^2(1\vee N)^{-\frac23}\|P_Nw_0\|_{L_{xy}^2}^{2}.
 \end{equation*}
 Summing the last estimate over $N\geq 0$ and using the equivalence of norms, we conclude that
  \begin{equation*}\label{eq-p12}
 \|W(t)w_0\|_{L^{2}([0, 1]; L_{xy}^{\infty})}
 \leq c \|w_0\|_{H^{\frac23+}}.
 \end{equation*}
 This completes the proof of Lemma~\ref{lem-zk1}.
 \end{proof}
Next, we move to prove  a version of the Kato-Ponce  Commutator  estimate in $\mathbb T^2$, 
which is an extension  of the one proved for the $\T$-case in \cite{IK}.

First, let us define \[ H^{\infty}(\mathbb T^2)=\bigcap_{s\in\R}H^{s}(\T^2).\] For $s\in\R$ we also define the Fourier multiplier \[\widehat{J^sf}(\m):=\widehat{J^s_{\T^2}f}(\m) = (1+|\m|^2)^{\frac{s}2}\widehat{f}(\m).\]
\begin{lemma}\label{lem2} Let $s\geq 1$ and $ f, g \in H^{\infty}(\mathbb T^2)$. 
Then
\begin{equation}\label{eq2.5}
\begin{split}
\|J^s(fg)-fJ^sg&\|_{L^2(\mathbb T^2)}\le c\,\Big\{\|J^sf\|_{L^2(\mathbb T^2)}\|g\|_{L^{\infty}(\mathbb T^2)}\\
&+
(\|f\|_{L^{\infty} (\mathbb T^2)}+\|\nabla f\|_{L^{\infty} (\mathbb T^2)})\,\|J^{s-1}g\|_{L^2(\mathbb T^2)}\Big\}.
\end{split}
\end{equation}
\end{lemma}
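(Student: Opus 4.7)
The plan is to estimate the commutator via a Littlewood--Paley decomposition. Writing $f = \sum_{N_1 \geq 0} P_{N_1} f$ and $g = \sum_{N_2 \geq 0} P_{N_2} g$ and reorganizing on the Fourier side, each dyadic block of $[J^s, f]g$ carries the symbol
$$\sigma_s(\m_1, \m_2) := \jb{\m_1+\m_2}^s - \jb{\m_2}^s.$$
Following the standard paraproduct paradigm, I would split the double sum according to the three regimes $N_1 \ll N_2$ (low--high), $N_1 \gg N_2$ (high--low) and $N_1 \sim N_2$ (high--high). The first regime will produce the term $\|\nabla f\|_{L^\infty}\|J^{s-1}g\|_{L^2}$ on the right-hand side of (\ref{eq2.5}), and the remaining two will produce $\|J^s f\|_{L^2}\|g\|_{L^\infty}$.

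In the low--high regime, the fundamental theorem of calculus yields
$$\sigma_s(\m_1, \m_2) = s\,\m_1 \cdot \int_0^1 \jb{\m_2 + t\m_1}^{s-2}(\m_2 + t\m_1)\, dt,$$
and since $|\m_1| \ll |\m_2|$ this quantity is of size $|\m_1|\jb{\m_2}^{s-1}$. Extracting the $\m_1$ factor as a $\nabla$ on $f$ and $\jb{\m_2}^{s-1}$ as $J^{s-1}$ on $g$, each dyadic block is controlled by
$$\|\,[J^s, P_{N_1} f]\, P_{N_2} g\,\|_{L^2} \lesssim \|\nabla P_{N_1} f\|_{L^\infty}\, \|J^{s-1} P_{N_2} g\|_{L^2}.$$
Because the output remains Fourier-supported at scale $N_2$, one sums first in $N_1 \ll N_2$ (using the uniform $L^\infty$-boundedness of smoothed Littlewood--Paley projections applied to $\nabla f$) and then in $N_2$ by Plancherel to obtain $\|\nabla f\|_{L^\infty}\|J^{s-1}g\|_{L^2}$. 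The zero-frequency block $N_1 = 0$ has $\nabla P_0 f \equiv 0$ on constants, and the additional $\|f\|_{L^\infty}\|J^{s-1}g\|_{L^2}$ term on the right-hand side serves as its substitute.

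In the high--low and high--high regimes, $|\sigma_s| \lesssim \jb{\m_1}^s$, so I would estimate $J^s(P_{N_1} f \cdot P_{N_2} g)$ and $P_{N_1} f \cdot J^s P_{N_2} g$ separately by $L^2\cdot L^\infty$ H\"older, each giving a dyadic bound of order $\|J^s P_{N_1} f\|_{L^2}\|g\|_{L^\infty}$. In the high--low case outputs are Fourier-localized at scale $N_1$, so Plancherel in $N_1$ together with summability in $N_2 \ll N_1$ suffices. The high--high case $N_1 \sim N_2$ is the principal technical obstacle, since the bilinear product is not concentrated at any single output scale; here I would insert a further Littlewood--Paley projection $P_M$ on the output and invoke Schur's test with kernel $K(M,N) = (M/N)^s \mathbf{1}_{M \lesssim N}$. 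Both row and column sums are finite thanks to the geometric decay produced by $s > 0$ (which is ensured by the hypothesis $s \geq 1$), yielding $\|J^s f\|_{L^2}\|g\|_{L^\infty}$. Assembling the three regimes gives (\ref{eq2.5}).
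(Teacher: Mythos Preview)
Your route is genuinely different from the paper's. The paper does not redo any paraproduct analysis on $\T^2$; it simply quotes the Kato--Ponce commutator estimate on $\R^2$ and transfers it to the torus by the Ionescu--Kenig periodization argument (multiply by a spatial cutoff, regard periodic functions as functions on $\R^2$, apply the Euclidean result, and undo the cutoff). What this buys the paper is that all the hard bilinear-multiplier work is outsourced to the known $\R^2$ statement. What your approach buys is self-containment: a direct proof on $\T^2$ that does not rely on any black-box Euclidean estimate.

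Your three-regime scheme is the standard one and will succeed, but one step is genuinely underspecified. In the low--high block you write that from $\sigma_s(\m_1,\m_2)=s\,\m_1\cdot\int_0^1\jb{\m_2+t\m_1}^{s-2}(\m_2+t\m_1)\,dt$ you ``extract $\m_1$ as $\nabla$ on $f$ and $\jb{\m_2}^{s-1}$ as $J^{s-1}$ on $g$'' to obtain the dyadic bound. The symbol does not factor as $a(\m_1)b(\m_2)$, and a pointwise bound $|\sigma_s|\lesssim |\m_1|\jb{\m_2}^{s-1}$ alone does \emph{not} give an $L^\infty\times L^2\to L^2$ bound for the associated bilinear multiplier. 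To make this rigorous you need either a Coifman--Meyer type estimate (checking the derivative bounds $|\partial_{\m_1}^\alpha\partial_{\m_2}^\beta\sigma_s|\lesssim N_1^{1-|\alpha|}N_2^{s-1-|\beta|}$ on the relevant dyadic shell, then expanding the rescaled symbol in a rapidly convergent Fourier series to reduce to tensor-product multipliers), or a first-order Taylor expansion $\sigma_s(\m_1,\m_2)=\m_1\cdot\nabla a(\m_2)+O(|\m_1|^2\jb{\m_2}^{s-2})$ with explicit control of the remainder. A second point: your summations in $N_1$ (and in $N_2$ in the high--low case) rely on $\sup_N\|P_{<N}h\|_{L^\infty}\lesssim\|h\|_{L^\infty}$, which fails for the paper's \emph{sharp} projections \eqref{eq-d2}; you should run the argument with a smooth Littlewood--Paley partition instead. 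With these two fixes your proof goes through.
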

Lemma~\ref{lem2} is a bidimensional generalization of \cite[Lemma 9.A.1 (b)]{IK}, it follows from the Kato-Ponce commutator estimate on $\R^2$ \cite{KP} through the same argument as in \cite{IK}.

\section{{\em A PRIORI} estimates: proof of the Theorem \ref{local-zk}}\label{sec-4}

In this section we use the estimates established in the previous section to establish some more estimates and prove the  main result regarding local well-posedness.  We begin by proving an {\em a priori} estimate that plays a fundamental role in our argument. 
\begin{lemma}\label{lem2-zk}
Let $w_0\in H^{\infty}(\T^2)$ and $w$ be the corresponding smooth solution to the IVP \eqref{zk-1}. Then for any $T\in [0, 1]$ and $s\geq 1$, we have
\begin{equation}\label{ap-1zk}
\|w\|_{L_T^{\infty}H^s(\T^2)}\leq c_s\exp\big(c_s(\|w\|_{L_T^1L_{xy}^{\infty}}+\|\nabla w\|_{L_T^1L_{xy}^{\infty}})\big)\|w_0\|_{H^s(\T^2)}.
\end{equation}
\end{lemma}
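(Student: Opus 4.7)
The strategy is the standard energy method combined with a commutator estimate and Gronwall's inequality. Since $w$ is smooth, all the manipulations below are legitimate and we can work directly at the $H^s$ level via the Fourier multiplier $J^s$.

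First I would apply $J^s$ to the equation, multiply the resulting identity by $J^s w$, and integrate over $\T^2$ to obtain an energy identity of the form
\begin{equation*}
\tfrac{1}{2}\tfrac{d}{dt}\|J^s w\|_{L^2}^2 + \int_{\T^2}(\p_x^3+\p_x\p_y^2)(J^s w)\cdot J^s w \, d\x + \int_{\T^2} J^s(w\p_x w)\cdot J^s w \, d\x = 0.
\end{equation*}
The linear contribution vanishes because $\p_x^3$ and $\p_x\p_y^2$ are skew-adjoint on $\T^2$ (integration by parts has no boundary term). So the whole matter reduces to controlling the nonlinear term.

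To handle the nonlinear term, the key step is to introduce the commutator by writing
\begin{equation*}
\int_{\T^2} J^s(w\p_x w)\cdot J^s w \, d\x = \int_{\T^2}\bigl[J^s(w\p_x w) - w J^s\p_x w\bigr]\cdot J^s w \, d\x + \int_{\T^2} w\, \p_x(J^s w)\cdot J^s w \, d\x,
\end{equation*}
using that $J^s$ commutes with $\p_x$. The second piece is integrated by parts in $x$ and becomes $-\tfrac{1}{2}\int \p_x w\,(J^s w)^2 \, d\x$, hence is bounded by $\|\p_x w\|_{L^\infty_{xy}} \|J^s w\|_{L^2}^2$. For the first piece I would apply Cauchy--Schwarz and then invoke Lemma~\ref{lem2} with $f=w$ and $g=\p_x w$, together with the trivial bound $\|J^{s-1}\p_x w\|_{L^2}\lesssim \|J^s w\|_{L^2}$; this yields
\begin{equation*}
\bigl\|J^s(w\p_x w) - w J^s\p_x w\bigr\|_{L^2} \lesssim \bigl(\|w\|_{L^\infty_{xy}} + \|\nabla w\|_{L^\infty_{xy}}\bigr)\|J^s w\|_{L^2}.
\end{equation*}

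Combining these bounds gives a differential inequality
\begin{equation*}
\tfrac{d}{dt}\|J^s w(t)\|_{L^2}^2 \leq c_s\bigl(\|w(t)\|_{L^\infty_{xy}} + \|\nabla w(t)\|_{L^\infty_{xy}}\bigr)\|J^s w(t)\|_{L^2}^2,
\end{equation*}
and Gronwall's lemma then produces exactly the stated exponential bound (with the $L^1_T$-norms of $w$ and $\nabla w$ appearing naturally as the time integral of the coefficient). Taking the supremum over $t\in[0,T]$ and using the equivalence $\|\cdot\|_{H^s(\T^2)}\sim\|J^s\cdot\|_{L^2(\T^2)}$ gives \eqref{ap-1zk}. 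The only point requiring care is the commutator estimate itself, which is why Lemma~\ref{lem2} has been set up; once that is in hand the rest is routine energy analysis and is insensitive to the particular structure (beyond skew-adjointness) of the dispersive part.
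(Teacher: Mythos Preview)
Your proposal is correct and follows essentially the same route as the paper: apply $J^s$, use skew-adjointness of the dispersion, split the nonlinear term into a commutator plus a piece handled by integration by parts, invoke Lemma~\ref{lem2} with $f=w$, $g=\partial_x w$, and close with Gronwall. The paper's proof is line-for-line the same argument.
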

\begin{proof}
We  apply the operator $J^s$ to \eqref{zk-1}, multiply the resulting equation by $J^sw$ and then integrate by parts, to obtain
\begin{equation}\label{est-2zk}
\frac12\frac{d}{dt}\int_{\T^2}(J^sw)^2\,dxdy +\int_{\T^2}J^s(w\partial_xw)J^sw\, dxdy=0.
\end{equation}

Using the commutator notation $[A, B]f = A(Bf) -B(Af)$,  integration by parts and the Cauchy-Schwarz inequality, we obtain from \eqref{est-2zk} that
\begin{equation}\label{eq2.8}
\begin{split}
\frac12\frac{d}{dt}\int_{\T^2}(J^sw)^2dxdy  &= -\int_{\T^2}\{J^s(\partial_xw)-wJ^s\partial_xww +wJ^s\partial_xw\}J^sw\,dxdy\\
&= -\int_{\T^2}([J^s, w]\partial_xw)J^su\, dxdy+\frac12\int_{\T^2}\partial_x w(J^sw)^2 \,dxdy\\
&\leq \|[J^s, w]\partial_xw\|_{L^2(\T^2)} \|J^sw\|_{L^2(\T^2)} +\frac12\|\partial_xw\|_{L^{\infty}(\T^2)}\|J^sw\|_{L^2(\T^2)}^2.
\end{split}
\end{equation}

In the light of estimate \eqref{eq2.5} in Lemma \ref{lem2}, we have
\begin{equation}\label{eq2.9}
\begin{split}
\|[J^s, w]\partial_xw\|_{L^2(\T^2)}\leq c&\big\{\|J^sw\|_{L^2(\T^2)}\|\partial_x w\|_{L^{\infty}(\T^2)}\\ 
&+(\|w\|_{L^{\infty}(\T^2)}+\|\nabla w\|_{L^{\infty}(\T^2)})\|J^{s-1}\partial_xw\|_{L^2(\T^2)}\big\}.
\end{split}
\end{equation}

Inserting \eqref{eq2.9} in \eqref{eq2.8}, we obtain, after simplification 
\begin{equation}\label{eq2.10}
\frac{d}{dt}\|w(t)\|_{H^s(\T^2)}^2 \leq c\big(\|w\|_{L^{\infty}(\T^2)} + \|\nabla w\|_{L^{\infty}(\T^2)}\big)\|w(t)\|_{H^s(\T^2)}^2.
\end{equation}

Using Gronwall's inequality, \eqref{eq2.10} yields
\begin{equation*}
\|w\|_{L_T^{\infty}H^s(\T^2)}^2\leq \|w_0\|_{H^s(\T^2)}^2\exp\Big(c\int_0^T\big(\|w\|_{L^{\infty}(\T^2)} + \|\nabla w\|_{L^{\infty}(\T^2)}\big)dt\Big),
\end{equation*}
which gives the required estimate \eqref{ap-1zk}.
\end{proof}

Following the approach in \cite{CK} we obtain the following  estimate that will be useful in our argument.

\begin{lemma}\label{lem3-zk}  Let $F\in L^1([0,T]:L^2(\T^2))$. Then any solution of the equation
\begin{equation}\label{eqn-12}
\partial_t w + \partial_x^3w+\partial_x\partial_y^2w+\partial_x F(w)=0, \hskip15pt (x,y)\in\T^2, \;\;t\in\R,
\end{equation}
satisfies
\begin{equation}\label{ineq-zk}
\|w\|_{L^1_TL^{\infty}_{xy}}\lesssim T^{\frac12}\,\Big(\|w\|_{L^{\infty}_T H^{\frac23+}}+\|F\|_{L^1_T L^2_{xy}}\Big).
\end{equation}
\end{lemma}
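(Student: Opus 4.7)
The strategy is to combine Duhamel's formula with the short-time Strichartz estimates of Lemma~\ref{lem-zk1}, applied on dyadic frequency pieces. I would write
$$w(t) = W(t)w(0) - \int_0^t W(t-s)\partial_x F(s)\,ds,$$
so that the task splits into a homogeneous and a forced contribution.

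The homogeneous piece is handled immediately: Cauchy--Schwarz in time followed by the global Strichartz estimate \eqref{eq-l3zk} gives
$$\|W(\cdot)w(0)\|_{L^1_T L^{\infty}_{xy}} \leq T^{\frac12}\|W(\cdot)w(0)\|_{L^2_T L^{\infty}_{xy}} \lesssim T^{\frac12}\|w(0)\|_{H^{\frac23+}} \leq T^{\frac12}\|w\|_{L^{\infty}_T H^{\frac23+}}.$$

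For the Duhamel term, I would Littlewood--Paley decompose and, for each frequency scale $N$, partition $[0,T]$ into $\sim T(1\vee N)^2$ consecutive subintervals $I_j=[t_j,t_{j+1}]$ of length $(1\vee N)^{-2}$, matching the scale in \eqref{eq-zk2a}. On each $I_j$, applying Duhamel from the left endpoint to the equation satisfied by $P_Nw$ gives
$$P_N w(t) = W(t-t_j)P_N w(t_j) - \int_{t_j}^t W(t-s)P_N\partial_x F(s)\,ds,\qquad t\in I_j.$$
The localized Strichartz \eqref{eq-zk2a} controls the first term in $L^2_{I_j} L^{\infty}$ by $(1\vee N)^{-\frac13}\|P_N w(t_j)\|_{L^2}$. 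For the forced term, Minkowski in $s$ together with \eqref{eq-zk2a} and the Bernstein-type bound $\|\partial_x P_N g\|_{L^2} \lesssim N\|P_N g\|_{L^2}$ yields a bound $(1\vee N)^{\frac23}\|P_N F\|_{L^1_{I_j} L^2}$ in $L^2_{I_j} L^{\infty}$. Embedding $L^2_{I_j}\hookrightarrow L^1_{I_j}$ at cost $|I_j|^{\frac12}\sim (1\vee N)^{-1}$ gives
$$\|P_N w\|_{L^1_{I_j} L^{\infty}} \lesssim (1\vee N)^{-\frac43}\|P_N w(t_j)\|_{L^2} + (1\vee N)^{-\frac13}\|P_N F\|_{L^1_{I_j} L^2}.$$
Summing over $j$, the $F$-contribution telescopes additively to $(1\vee N)^{-\frac13}\|P_N F\|_{L^1_T L^2}$ (since the $I_j$ partition $[0,T]$), while the data-contribution is bounded using $\#\{j\}\lesssim T(1\vee N)^2$ by $T(1\vee N)^{\frac23}\|P_N w\|_{L^{\infty}_T L^2}$. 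Finally summing over dyadic $N$ via Cauchy--Schwarz (with the convergent weight $\sum (1\vee N)^{-\frac23-}$) converts the $(1\vee N)^{\frac23}$-weighted sum on the data side into $\|w\|_{L^{\infty}_T H^{\frac23+}}$, and the $(1\vee N)^{-\frac13}$-weighted sum on the forcing side into $\|F\|_{L^1_T L^2}$; using $T\leq 1$ to pay the harmless $T^{\frac12}$ on the two contributions then yields \eqref{ineq-zk}.

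The main obstacle is the derivative loss in the forcing term $\partial_x F$: pulling $\partial_x$ through $W(t-s)$ costs a factor of $N$, which by itself would ruin summability. The balance is restored by combining two small factors: the gain $(1\vee N)^{-\frac13}$ from the short-time Strichartz \eqref{eq-zk2a} and the factor $|I_j|^{\frac12}\sim (1\vee N)^{-1}$ from the Cauchy--Schwarz embedding $L^2_{I_j}\hookrightarrow L^1_{I_j}$, which together produce a net gain of $(1\vee N)^{-\frac13}$ even after the derivative loss. The threshold $H^{\frac23+}$ appearing in \eqref{ineq-zk} for $w$ is exactly the dual of this gain in Lemma~\ref{lem-zk1}.
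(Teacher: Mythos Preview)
Your approach is essentially the paper's: Littlewood--Paley decompose, chop $[0,T]$ into short subintervals at each frequency scale, apply Duhamel from the left endpoint of each, invoke the localized Strichartz estimate \eqref{eq-zk2a}, and sum. (The initial global split into a homogeneous piece and a Duhamel piece is redundant, since on each subinterval you re-derive Duhamel for the full $P_Nw$ anyway; this is harmless.)

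There is, however, one genuine slip. With subintervals of length $(1\vee N)^{-2}$ and hence $\sim T(1\vee N)^2$ of them, your summed forcing contribution is $(1\vee N)^{-1/3}\|P_NF\|_{L^1_TL^2}$, carrying \emph{no} factor of $T^{1/2}$. Your final sentence asserts this is repaired ``using $T\leq 1$'', but that inequality goes the wrong way: for $T\leq 1$ one has $T^{1/2}\leq 1$, so you would be claiming a bound \emph{smaller} than the one you actually proved. The stated lemma really does need the prefactor $T^{1/2}$ on $\|F\|_{L^1_TL^2}$; it is precisely what makes the bootstrap \eqref{apriori2-zk} close for large data in Lemma~\ref{lem4-zk}.

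The fix is the paper's choice of subinterval length: take $(1\vee N)^2$ subintervals of length $T(1\vee N)^{-2}$ rather than $(1\vee N)^{-2}$. Then the Cauchy--Schwarz embedding $L^2_{I_j}\hookrightarrow L^1_{I_j}$ costs $|I_j|^{1/2}=T^{1/2}(1\vee N)^{-1}$, and this $T^{1/2}$ propagates uniformly to both the data and forcing contributions, yielding exactly \eqref{ineq-zk}. Since $T(1\vee N)^{-2}\leq (1\vee N)^{-2}$, the short-time Strichartz bound \eqref{eq-zk2a} still applies on these shorter intervals with the same constant.
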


\begin{proof} 
Let $P_N$ be
the projection operator defined in (\ref{eq-d2}), and fix such a number $N\in 2^{\N}\cup\{0\}$. We divide the interval $[0;T]$ in subintervals $[a_k, a_{k+1})$ of size $T(1\vee N)^{-2}$ for $k=1, \dots, (1\vee N)^2$. Then

\begin{equation}\label{ineq1-zk}
\begin{split}
\|P_N w\|_{L^1_T L^{\infty}_{xy}} &\le \underset{k=1}{\overset{(1\vee N)^2}{\sum}} \|\chi_{[a_k, a_{k+1})}(t) P_N w\|_{L^1_T L^{\infty}_{xy}}\\
&\lesssim \underset{k=1}{\overset{(1\vee N)^2}{\sum}} \big(\int_{a_k}^{a_{k+1}}\,dt\big)^{1/2}  \|\chi_{[a_k, a_{k+1})}(t) P_N w\|_{L^{2}_T L^{\infty}_{xy}}\\
&\lesssim  (T(1\vee N)^{-2})^{1/2}  \underset{k=1}{\overset{(1\vee N)^2}{\sum}}  \|\chi_{[a_k, a_{k+1})}(t) P_N w\|_{L^{2}_T L^{\infty}_{xy}}.
\end{split}
\end{equation}

On the other hand, using Duhamel's formula, for $t\in [a_k, a_{k+1})$, we obtain from \eqref{eqn-12}
\begin{equation}\label{ineq2-zk}
w(t)= W(t-a_k)\, w(a_k) -\int_{a_k}^t  W(t-s) [\partial_x F(w)]\, ds.
\end{equation}

Using Lemma \ref{lem-zk1} it follows from \eqref{ineq2-zk} that 
\begin{equation}\label{ineq3-zk}
\begin{split}
 \|\chi_{[a_k, a_{k+1})}(t) P_N w\|_{L^{2}_T L^{\infty}_{xy}} \lesssim (1\vee N)^{-\frac13} \|P_N w(a_k)\|_{L^2_{xy}}
 + (1\vee N)^{-\frac13}  N \|\chi_{[a_k, a_{k+1})}(t)  P_N F\|_{L^1_T L^2_{xy}}.
\end{split}
 \end{equation}
 
 Combining \eqref{ineq1-zk} and \eqref{ineq3-zk}, we have
 \begin{equation}\label{ineq4-zk-s}
 \begin{split}
 \|P_N w\|_{L^1_T L^{\infty}_{xy}} &\lesssim (T(1\vee N)^{-2})^{1/2}  \underset{k=1}{\overset{(1\vee N)^2}{\sum}} \Big((1\vee N)^{-\frac13} \|P_N w(a_k)\|_{L^2_{xy}}\Big ) \\
&\hskip15pt +\,(T(1\vee N)^{-2})^{1/2}  \underset{k=1}{\overset{(1\vee N)^2}{\sum}} (1\vee N)^{-\frac13} N \|\chi_{[a_k, a_{k+1})}(t)  P_N F\|_{L^1_T L^2_{xy}}\\
&\lesssim (T(1\vee N)^{-2})^{1/2} (1\vee N)^{-\frac13} \underset{k=1}{\overset{(1\vee N)^2}{\sum}}  \|P_N w(a_k)\|_{L^2_{xy}}\\
 &+(T(1\vee N)^{-2})^{1/2}(1\vee N)^{-\frac13}  N  \|P_N F\|_{L^1_T L^2_{xy}}.
\end{split}
 \end{equation}

Simplifying further, one obtains from \eqref{ineq4-zk-s} that
\begin{equation}\label{ineq4-zk}
 \begin{split}
\|P_N w\|_{L^1_T L^{\infty}_{xy}} &\lesssim   T^{\frac12} \Big ( (1\vee N)^{\frac23} (1\vee N)^{-2}  \underset{k=1}{\overset{(1\vee N)^2}{\sum}}  \|P_N w(a_k)\|_{L^2_{xy}}
+(1\vee N)^{-\frac13}  \|P_N F\|_{L^1_T L^2_{xy}}\Big)\\
&\lesssim  T^{\frac12} (1\vee N)^{-\epsilon} \Big ( (1\vee N)^{-2}  \underset{k=1}{\overset{(1\vee N)^2}{\sum}} 
 \| (1\vee N)^{\frac23+\epsilon} P_N w(a_k)\|_{L^2_{xy}}
+\|P_N F\|_{L^1_T L^2_{xy}}\Big)
 \end{split}
 \end{equation}
for $0<\epsilon < \frac13$.

Thus inequality \eqref{ineq-zk} follows from summing \eqref{ineq4-zk} over $N$.
\end{proof}

 Now, we record a  key estimate to prove the local well-posedness result for the IVP \eqref{zk-1}.

\begin{lemma}\label{lem4-zk} Let $w$ be a solution of \eqref{zk-1} with $w_0\in H^{\infty}(\T^2)$ defined in $[0,T]$.
Then for any $s>5/3$, there exist  $T=T(\|w_0\|_{H^s})$ and a constant $c_T(\|w_0\|_{H^s}, s)$ such that
\begin{equation}\label{apriori-zk}
g(T):=\int_0^T \big(\|w(t)\|_{L^{\infty}_{xy}}+\|\nabla w\|_{L^{\infty}(\T^2)}\big)\,dt\le c_T.
\end{equation}
\end{lemma}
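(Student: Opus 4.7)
The plan is to combine the Strichartz-type bound of Lemma~\ref{lem3-zk} with the energy estimate of Lemma~\ref{lem2-zk} in a bootstrap argument. Since $g(T)$ involves $L^1_TL^{\infty}_{xy}$ norms of $w$ and of $\nabla w$, I will apply Lemma~\ref{lem3-zk} not only to $w$ itself but also to $\partial_x w$ and $\partial_y w$, each of which satisfies an equation of the form \eqref{eqn-12} for a suitable nonlinearity $F$.

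First, rewrite the nonlinearity of \eqref{zk-1} as $\partial_x(w^2/2)$, so that Lemma~\ref{lem3-zk} applies to $w$ with $F = w^2/2$. Differentiating \eqref{zk-1} in $x$ gives
$$\partial_t(\partial_x w) + \partial_x^3(\partial_x w) + \partial_x\partial_y^2(\partial_x w) + \partial_x(w\partial_x w) = 0,$$
so Lemma~\ref{lem3-zk} applies to $\partial_x w$ with $F = w\partial_x w$. The delicate point is the $y$-derivative, where the algebraic identity
$$\partial_y(w\partial_x w) = (\partial_y w)(\partial_x w) + w\partial_x\partial_y w = \partial_x(w\partial_y w)$$
again puts the equation for $\partial_y w$ into the form \eqref{eqn-12}, this time with $F = w\partial_y w$.

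Applying Lemma~\ref{lem3-zk} to each of $w$, $\partial_x w$, $\partial_y w$ and using the elementary bound $\|fg\|_{L^2_{xy}} \leq \|f\|_{L^{\infty}_{xy}}\|g\|_{L^2_{xy}}$ together with the embedding $H^{s-1}(\T^2)\hookrightarrow H^{2/3+}(\T^2)$ valid thanks to $s > 5/3$, I will obtain, after summing the three inequalities, an estimate of the form
$$g(T) \leq C\,T^{1/2}\,\|w\|_{L^{\infty}_T H^s}\bigl(1 + g(T)\bigr).$$
Combining this with Lemma~\ref{lem2-zk}, which gives $\|w\|_{L^{\infty}_T H^s} \leq c_s e^{c_s g(T)}\|w_0\|_{H^s}$, produces the self-referential inequality
$$g(T) \leq C\,T^{1/2}\,e^{c_s g(T)}\,\|w_0\|_{H^s}\bigl(1+g(T)\bigr).$$

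Since $g$ is continuous in $T$ and $g(0)=0$, a standard continuity argument then yields $T = T(\|w_0\|_{H^s}) > 0$ and a constant $c_T = c_T(\|w_0\|_{H^s})$ such that $g(T) \leq c_T$, which is \eqref{apriori-zk}. The main obstacle is precisely the closure of this bootstrap: the threshold $s > 5/3$ is sharp for the method because it is exactly the condition under which the $2/3$-derivative loss of Lemma~\ref{lem3-zk}, applied to $\partial_x w$ and $\partial_y w$, can be absorbed by the $H^s$ norm of $w$. The structural identity $\partial_y(w\partial_x w) = \partial_x(w\partial_y w)$ is equally crucial, as without it the equation governing $\partial_y w$ would fall outside the scope of Lemma~\ref{lem3-zk}.
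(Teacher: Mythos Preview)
Your proposal is correct and follows essentially the same route as the paper: you apply Lemma~\ref{lem3-zk} to $w$, $\partial_x w$, and $\partial_y w$ with $F$ equal to $\tfrac12 w^2$, $\tfrac12\partial_x(w^2)$, and $\tfrac12\partial_y(w^2)$ respectively (your $w\partial_x w$ and $w\partial_y w$ are exactly these), combine with Lemma~\ref{lem2-zk}, and close by continuity. The identity $\partial_y(w\partial_x w)=\partial_x(w\partial_y w)$ that you single out is precisely what the paper uses implicitly when it asserts that $\partial_y w$ satisfies \eqref{eqn-12} with $F=\tfrac12\partial_y(w^2)$.
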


\begin{proof}
Observe that $w$, $\partial_xw$ and $\partial_y w$  satisfy Lemma \ref{lem3-zk} with $F$ given, respectively, by $\frac12w^2$,  $\frac12\partial_x(w^2)$ and $\frac12\partial_y(w^2)$.
Hence,  for $s'>  2/3$, using \eqref{ineq-zk} for $w$, $\partial_x w$ and $\partial_y w$ with respective $F$, we obtain
\begin{equation}\label{ineq5-zk}
\begin{split}
g(T) &\le c\, T^{\frac12}\,\Big( \|J^{s'} w\|_{L^{\infty}_TL^2_{xy}}+ \|J^{s'} \partial_x w\|_{L^{\infty}_TL^2_{xy}}+ \|J^{s'} \partial_y w\|_{L^{\infty}_TL^2_{xy}}\\
&\hskip10pt + \int_0^T  \| w^2\|_{L^2_{xy}}\,dt+  \int_0^T\|\partial_x (w^2)\|_{L^2_{xy}}\,dt+  \int_0^T\|\partial_y (w^2)\|_{L^2_{xy}}\,dt\Big).
\end{split}
\end{equation}

It follows that, for $s>5/3$,
\begin{equation}\label{ineq6-zk}
\|J^{s'} w\|_{L^{\infty}_TL^2_{xy}}+ \|J^{s'} \partial_x w\|_{L^{\infty}_TL^2_{xy}}+ \|J^{s'} \partial_y w\|_{L^{\infty}_TL^2_{xy}} \le c\, \|J^s w\|_{L^{\infty}_TL^2_{xy}}.
 \end{equation}

On the other hand,
\begin{equation}\label{ineq7-zk}
\begin{split}
 \int_0^T &( \| w^2\|_{L^2_{xy}}+\|\partial_x (w^2)\|_{L^2_{xy}}+\|\partial_y (w^2)\|_{L^2_{xy}})\,dt\\
 &\le c\, \;\|w\|_{L^{\infty}_TL^2_{xy}}\int_0^T (\|w(t)\|_{L^{\infty}_{xy}}+\|\partial_x w(t)\|_{L^{\infty}_{xy}}+\|\partial_y w(t)\|_{L^{\infty}_{xy}}\big)\,dt\\
 &\le c\, \;\|w\|_{L^{\infty}_T{H^s}}\int_0^T (\|w(t)\|_{L^{\infty}_{xy}}+\|\nabla w\|_{L^{\infty}_{xy}}\big)\,dt.
 \end{split}
 \end{equation}
 
 Combining \eqref{ineq5-zk}, \eqref{ineq6-zk}, \eqref{ineq7-zk} with Lemma \ref{lem2-zk} we obtain the inequality
 \begin{equation}\label{apriori2-zk}
 g(T) \le cT^{\frac12}\,\|w_0\|_{H^s}\,\exp{(c\, g(T))}(1+g(T)).
 \end{equation}
 
 From \eqref{apriori2-zk}  we can deduce the required result analogously as in \cite{CK}, so we omit the details. 
 \end{proof}

\begin{proof}[Proof of Theorem \ref{local-zk}]
The main tools in the proof are the results from Lemmas \ref{lem2-zk} and \ref{lem4-zk}. 

Let $s>5/3$ and consider an initial datum $w_0\in H^s(\T^2)$. As already mentioned, for $s>2$, we can prove the local well-posedness in $H^s$ of (1.1) by solving it as a quasi-linear hyperbolic PDE. So, we may consider $w_0\in H^s$, $5/3<s\leq 2$. Density of $H^{\infty}$ in $H^s$ allows one to find $w_0^{\epsilon}\in H^{\infty}$ such that $\|w_0^{\epsilon}-w_0\|_{H^s}\to 0$. Moreover, one has $\|w_0^{\epsilon}\|_{H^s}\leq c \|w_0\|_{H^s}$.

For $0<\epsilon<1$, let $w^{\epsilon}$ be the solution to the  IVP \eqref{zk-1} corresponding to the initial data $w_0^{\epsilon}\in H^{\infty}$ on $[0, \tau]$, $\tau>0$.  
One can use Lemma \ref{lem4-zk} to extend $w^{\epsilon}$ on a time interval $[0,T]$,   $T=T(\|w_0\|_{H^s} )>0$ and  to show that there exists a constant $c_T$ such that
\begin{equation}\label{pf-3}
\int_0^T \big(\|w^{\epsilon}(t)\|_{L^{\infty}_{xy}}+\|\partial_x w^{\epsilon}(t)\|_{L^{\infty}_{xy}}+\|\partial_y w^{\epsilon}(t)\|_{L^{\infty}_{xy}}\big)\,dt\le c_T.
\end{equation}
Also, from Lemma \ref{lem2-zk}, one has
\begin{equation}\label{pf-4}
\sup_{0\leq t \leq T}\|w^{\epsilon}\|_{H^s(\T^2)}\leq c_T.
\end{equation}

Now, using Gronwall's inequality and the estimate \eqref{pf-3}, one can show that
\begin{equation}\label{pf-5}
\sup_{0\leq t \leq T}\|w^{\epsilon}-w^{{\epsilon}'}\|_{L^2(\T^2)}\to 0\qquad {\rm{as}}\quad \epsilon,\,\epsilon '\to 0.
\end{equation}

In view of \eqref{pf-5} and \eqref{pf-4}, one can get, for $s'<s$, $w\in C([0, T];H^{s'})\cap L^{\infty}([0, T]; H^s)$ such that $w^{\epsilon}\to w$ in $C([0, T]; H^{s'})$. Indeed,  \eqref{pf-5}  implies that as $\epsilon\to 0$, $w^{\epsilon}\to w$ in $C([0, T], L^2)$. In the light of estimate \eqref{pf-4} we note that, $w^{\epsilon}\in L^{\infty}([0, T], H^s)$. Hence, by weak* compactness, $w\in L^{\infty}([0, T], H^s)$.

Once again, one can use Gronwall's inequality,  to prove that $w$ is the unique solution to the IVP \eqref{zk-1}. An usual Bona--Smith \cite{BS} argument can be used to prove the continuity of the solution $w(t)$ and the continuity of the flow-map in $H^s$. For a detailed exposition of this argument, we refer to a recent work \cite{KP16}.
\end{proof}



\section{Failure of the local uniform continuity of the flow map }\label{sec-5}

In this section we prove the failure of the local uniform continuity of the flow-map for the IVP  \eqref{zk-1} associated to the ZK equation. Let $\varphi(m, n) = m^3+mn^2$ be the time frequency of the solution to the linear problem associated to \eqref{zk-1} (see also \eqref{w-t}) with spatial frequency $\m=(m,n)$.
 
Following the idea in \cite{KochTzvetkov2003BO,KochTzvetkov2008,Robert2018}, we will  construct a family of approximate solutions whose frequencies lie in the region where the resonance relation 
\begin{equation}\label{resonance}
R(m,m_1,n,n_1):=\varphi(m, n)-\varphi(m-m_1, n-n_1)-\varphi(m_1, n_1),
\end{equation}
vanishes. Recall that the resonance relation \eqref{resonance} can explicitly be written  as 
\begin{equation}\label{reso}
\begin{split}
R(m,m_1,n,n_1)&:= 3mm_1(m-m_1)+\big[mn^2-m_1n_1^2 -(m-m_1)(n-n_1)^2\big]\\
& = 3mm_1(m-m_1) +2nn_1(m-m_1) +m_1n^2-mn_1^2.
\end{split}
\end{equation}
In particular, note that for any choice of $(m,n)\in \Z^2$, we have $R(m,0,n,2n)=0$. Consequently, we will exploit this resonant interaction to construct the solutions in 
Theorem~\ref{lemma1.1}.

\begin{proof}[Proof of Theorem~\ref{lemma1.1}]
Let us fix $s>5/3$. For $\theta \in [-1,1]$ and $m\in\N^*$, let us define the family of functions on $[0,1]\times\T^2$ by
\begin{multline}\label{utheta}
u_{\theta,m}(t,x,y):= \theta m^{-1}\cos(2y) + \cos\left(\frac{\theta}{2}t\right)m^{-s}\cos\left(mx-y+\varphi(m,-1)t\right)\\+\sin\left(\frac{\theta}{2}t\right)m^{-s}\sin\left(mx+y+\varphi(m,1)t\right)+r_{\theta,m}(t,x,y).
\end{multline}
Note that all three modes above are solutions to the linear part of (\ref{zk-1}) modulated by a time oscillating factor, and the third one corresponds to the main part of the nonlinear interaction of the first two (see below). The last term is given by
\begin{multline}\label{remainder}
r_{\theta,m} =m^{-s}R(m,0,-1,2)^{-1}\cos\left(\frac{\theta}{2}t\right)\cos\left(mx-3y+\varphi(m,-1)t\right)\\ + m^{-s}R(m,0,1,-2)^{-1}\sin\left(\frac{\theta}{2}t\right)\sin\left(mx+3y+\varphi(m,1)t\right).
\end{multline}
It allows to cancel the remaining interactions (due to the requirement of working with non localized real-valued solutions) in order for these approximate solutions to satisfy the equation up to a sufficiently small error. Indeed, we have the estimate
\begin{equation}\label{approx-sol}
\|(\p_t+\p_x\Delta)u_{\theta,m}+u_{\theta,m}\p_xu_{\theta,m}\|_{H^s}\lesssim m^{(-1-s)\vee (1-2s)},
\end{equation}
which holds uniformly in $(\theta,t)\in[-1,1]\times[0,1]$ (and $a\vee b$ stands for the maximum between $a$ and $b$).

To prove (\ref{approx-sol}), let us write $u_i$, $i=1,2,3$ for the first three terms in the definition of $u_{\theta,m}$ (\ref{utheta}), then the term inside the norm on the left-hand side of (\ref{approx-sol}) reads
\[(\p_t+\p_x\Delta)(u_2+u_3+r_{\theta,m}) + u_1\p_x(u_2+u_3) +\widetilde{r},\]
with $\widetilde{r}=u_1\p_x r_{\theta,m} + (u_2+u_3+r_{\theta,m})\p_x u_{\theta,m}$.

A key ingredient here is that 
\begin{equation}\label{eq-curv-reson}
R(m,0,-1,2)=R(m,0,1,-2)=-8m,
\end{equation}
so that $\widetilde{r}=O_{L^2}(m^{-1-s}+m^{1-2s})$, where we use the notation $O_{L^2}(m^{\alpha})$ for functions having an $L^2(\T^2)$-norm bounded by a constant times $m^{\alpha}$, uniformly in $(\theta,t)\in[-1,1]\times[0,1]$.

Next, straightforward computations give
\begin{multline}\label{comput1}
(\p_t+\p_x\Delta)(u_2+u_3) = - \frac{\theta}{2}\sin\left(\frac{\theta}{2}t\right)m^{-s}\cos\left(mx-y+\varphi(m,-1)t\right)\\ + \frac{\theta}{2}\cos\left(\frac{\theta}{2}t\right)m^{-s}\sin\left(mx+y+\varphi(m,1)t\right),
\end{multline}
and
\begin{multline}\label{comput2}
u_1\p_x(u_2+u_3)= -\frac{\theta}{2}\cos(\frac{\theta}{2}t)m^{-s}\left[\sin(mx+y+\varphi(m,-1)t)+\sin(mx-3y+\varphi(m,-1)t)\right]\\
+\frac{\theta}{2}\sin(\frac{\theta}{2}t)m^{-s}\left[\cos(mx-y+\varphi(m,1)t)+\cos(mx+3y+\varphi(m,1)t)\right].
\end{multline}
Since $\varphi(m,-1)=\varphi(m,1)$ (as the symbol is even in $n$), we see that the first and third terms in the nonlinear interaction (\ref{comput2}) are counterbalanced by the linear evolution (\ref{comput1}). To cancel the two remaining terms, we finally compute
\begin{multline*}
(\p_t+\p_x\Delta)r_{\theta,m} = \frac{\theta}{2}\cos(\frac{\theta}{2}t)m^{-s}\sin(mx-3y+\varphi(m,-1)t)\\
-\frac{\theta}{2}\sin(\frac{\theta}{2}t)m^{-s}\cos(mx+3y+\varphi(m,1)t)+O_{L^2}(m^{-1-s}),
\end{multline*}
by using the definition of $\varphi$ and $R$. This proves (\ref{approx-sol}).

With this estimate at hand, we can control the difference between $u_{\theta,m}$ and the genuine solution arising from $u_{\theta,m}(0)$ (provided by Theorem~\ref{local-zk}) via a standard energy estimate. Let us write $u$ for this solution. First, we have that $u$ exists on the whole time interval $[0,1]$. Indeed, in Theorem~\ref{local-zk}, it is not hard to keep track of the size of $T=T(\|u(0)\|_{H^s})$ : in view of (\ref{apriori2-zk}), we can take ${\displaystyle T\sim (1+\|u(0)\|_{H^s})^{-\frac12}}$, thus $T\sim 1$ for $u(0)=u_{\theta,m}(0)$. Next, let us define $v = u - u_{\theta,m}$, then $v$ solves
\[(\p_t +\p_x\Delta)v +v\p_x v +\p_x(u_{\theta,m}v) + G = 0, \]
where $G$ is the term in (\ref{approx-sol}). Proceeding then as in Lemma~\ref{lem2-zk} at the $L^2$ level, we get
\[\frac{d}{dt}\|v(t)\|_{L^2}^2\lesssim \left(\|u_{\theta,m}(t)\|_{L^{\infty}}+\|\p_xu_{\theta,m}(t)\|_{L^{\infty}}\right)\|v(t)\|_{L^2}^2 + \|v(t)\|_{L^2}\|G(t)\|_{L^2},\]
which, along with the definition of $u_{\theta,m}$ and (\ref{approx-sol}) and that $v(0)=0$, leads to the bound
\[\sup_{t\in[0,1]}\|v(t)\|_{L^2}\lesssim m^{(1-2s)\vee (-1-s)}.\]
Interpolating with the trivial bound
\[\|v(t)\|_{H^{s+1}}\lesssim \|u(t)\|_{H^{s+1}}+\|u_{\theta,m}(t)\|_{H^{s+1}}\lesssim m,\]
which is a consequence of Lemmata~\ref{lem2-zk} and~\ref{lem4-zk} and the definition of $u_{\theta,m}$, we get at last
\begin{equation}\label{bound-th2}
\|v(t)\|_{H^s}\lesssim m^{-\varepsilon},
\end{equation}
 for some $\varepsilon>0$.

Finally, the proof of Theorem~\ref{lemma1.1} is completed as follows : take $u_m$ (respectively $v_m$) to be the solution of the IVP (\ref{zk-1}) with initial condition $u_m(0,x,y) = u_{1,m}(0,x,y)$ (respectively $v_m(0,x,y) = u_{-1,m}(0,x,y)$) given by Theorem~\ref{local-zk} (which are well-defined on the time interval $[0,1]$ as before), then (\ref{unif_bound}) is again a consequence of Lemmata~\ref{lem2-zk} and~\ref{lem4-zk}, whereas (\ref{init_bound}) is straightforward from the definition of $u_{\theta,m}(0)$. To prove (\ref{failure_bound}), we use (\ref{bound-th2}) to get the lower bound
\[\|u_m(t)-v_m(t)\|_{H^s}\geq \|u_{1,m}(t)-u_{-1,m}(t)\|_{H^s}+O(m^{-\varepsilon}).\]
 From the definition of the approximate solution and with the use of (\ref{eq-curv-reson}), we finally get
 \[\|u_{1,m}(t)-u_{-1,m}(t)\|_{H^s}=2\left|\sin\left(\frac{t}{2}\right)\right|\|\sin(mx+y+\varphi(m,1)t)\|_{L^2}+O(m^{-1})\gtrsim |t|+O(m^{-1}).\]
This completes the proof of Theorem~\ref{lemma1.1}.
\end{proof}
\begin{remark}\label{FIN}~
\begin{enumerate}[label=(\alph*)]
\item It is not difficult to see that $u_{\theta,m}$ is still a sufficiently good approximate solution even for lower values of $s$ : one can check that the last bound holds with $\varepsilon>0$ for any $s>1/2$. However, since there exists no flow map defined on $H^s(\T^2)$ when $1/2<s\leq 5/3$, we restricted our construction to those $s>5/3$ treated by Theorem~\ref{local-zk}.
\item The construction above is independent of the choice of the periods in $x$ and $y$, thus one can repeat the argument for any torus $\T^2_{\lambda} = (\R/2\pi\lambda_1\Z)\times(\R/2\pi\lambda_2\Z)$ and recover the quasi-linear behaviour likewise. In particular, there is another interesting resonant interaction~:~$R(m,m_1,\sqrt{3}m,-\sqrt{3}m_1)=0$. To perform the same construction as above, this requires to work on a torus $\T^2_{\lambda}$ whose periods $\lambda=(\lambda_1,\lambda_2)$ satisfy the condition ${\displaystyle \sqrt{3}\frac{\lambda_2}{\lambda_1}\in\mathbb{Q}}$.
\item The frequency $m_1=0$ in the aforementioned resonant interaction corresponds to the $x$-mean value ${\displaystyle \int_{\T}u(t,x,y)\,dx}$, which is preserved by the flow :
\[\int_{\T} u(t,x,y)\,dx = \int_{\T}u_0(x,y)\,dx,~\forall (t,y)\in\R\times\T.\]
We do not know the existence of a gauge transformation allowing to get rid of the contribution of this frequency. This is in contrast with the periodic KP type equations (see \emph{e.g.} \cite{JB_kp,Robert2017}), where ${\displaystyle \int_{\T}u_0(x,y)\,dx}$ must be independent of $y$ to define the anti-derivative, and where the Galilean transform reduces the problem to initial data with this constant being equal to zero.
\item At last, let us notice that the above construction uses in a crucial way the \lq\lq curvature\rq\rq property (\ref{eq-curv-reson}), meaning that for a given resonant interaction $R(m,m_1,n,n_1)=0$ we need $R(m, m+ m_1, n, n+ n_1)$ to be sufficiently large.
\end{enumerate}
\end{remark}

\end{document}